\numberwithin{equation}{section}
\newtheorem{theorem}{Theorem}[section]
\newtheorem{corollary}[theorem]{Corollary}
\theoremstyle{definition}
\theoremstyle{definition} 
\def\no{|\!|}
\newcommand{\cB}{\mathcal B}
\newcommand{\cE}{\mathcal E}
\newcommand{\cM}{\mathcal M}
\newcommand{\R}{\mathbb R}
\newcommand{\C}{\mathbb C}
\newcommand{\N}{\mathbb N}
\newcommand{\LL}{\mathbb L}
\newcommand{\st}{\,:\,}
\newcommand{\EE}{\mathbb E}
\newcommand{\FF}{\mathbb F}
\newcommand{\YY}{\mathbb Y}
\DeclareMathSymbol{\complement}{\mathord}{AMSa}{"7B}
\def\vv<#1>{\langle #1\rangle}
\def\Vv<#1>{\bigl\langle #1\bigr\rangle}
\begin{document}


\title[Foliations for quasilinear parabolic problems]
{Invariant foliations near normally hyperbolic equilibria for quasilinear parabolic problems}

\author[J. Pr\"uss]{Jan Pr\"uss}
\address{Martin-Luther-Universit\"at Halle-Witten\-berg\\
         Institut f\"ur Mathematik \\
         Theodor-Lieser-Strasse 5\\
         D-06120 Halle, Germany}
\email{jan.pruess@mathematik.uni-halle.de}

\author[G. Simonett]{Gieri Simonett}
\address{Department of Mathematics\\
         Vanderbilt University \\
         Nashville, TN~37240, USA}
\email{gieri.simonett@vanderbilt.edu}

\author[M. Wilke]{Mathias Wilke}
\address{Martin-Luther-Universit\"at Halle-Witten\-berg\\
         Institut f\"ur Mathematik \\
         Theodor-Lieser-Strasse 5\\
         D-06120 Halle, Germany}
\email{mathias.wilke@mathematik.uni-halle.de}

\thanks{}

\begin{abstract}
We consider quasilinear
parabolic evolution equations in the situation where the set of
equilibria forms a finite-dimensional $C^1$-manifold which is normally hyperbolic.
The existence of foliations of the stable and unstable manifolds is shown assuming merely
$C^1$-regularity of the underlying equation.
\end{abstract}
\keywords{}
\maketitle
{\bf Keywords:}
{Quasilinear parabolic equations, normally stable, normally hyperbolic,
invariant foliations, free boundary problems, symmetry.}

{\bf AMS subject classification:}
{35K55, 35B35, 37D10, 37D30 35R35, 34G20.}
\\
\noindent
\section{Introduction}
In recent years, foliations of invariant manifolds of (semi)-flows have attracted considerable attention. They exhibit a precise analysis of the given (semi)-flow near a certain special invariant manifold $\cE$,
most often a center manifold.
We refer to \cite{BLZ98, BLZ00, BLZ08} for a comprehensive discussion of the subject
for infinite-dimensional dynamical systems,
and to \cite{HPS77,W94} for the finite-dimensional case.

In this paper we consider a special situation, namely the case where $\cE$ is a
$C^1$-manifold of equilibria for a quasilinear parabolic system. The precise setting for this paper is as follows.
Let $X_0$ and $X_1$ be two Banach spaces such that
$X_1$ is continuously and densely
embedded in $X_0$. We then consider the abstract autonomous
quasilinear problem
\begin{equation}
\label{u-equation}
\dot{u}(t)+A(u(t))u(t)=F(u(t)),\quad t>0, \quad u(0)=u_0.
\end{equation}
For $1<p<\infty$ we introduce the real interpolation space
$X_\gamma:=(X_0,X_1)_{1-1/p,p}$ and we assume that there is an open
set $V\subset X_\gamma$ such that
\begin{equation}
\label{AF}
(A,F)\in C^1(V,\cB(X_1,X_0)\times X_0).
\end{equation}
Here $\cB(X_1,X_0)$ denotes the space of all bounded
linear operators from $X_1$ into $X_0$.
In the sequel we use the notation $|\cdot|_j$ to denote the norm
in the respective spaces $X_j$ for $j\in\{0,1,\gamma\}$.
Moreover, for any normed space $X$,
$B_X(u,r)$ denotes the open ball in $X$ with radius $r>0$ around $u\in X$.
\smallskip\\
\noindent
If $A(u)$ has the property of {\bf maximal $L_p$-regularity} for each $u\in V$, then it is well-known that problem \eqref{u-equation} generates a local semiflow in $V\subset X_\gamma$;
see e.g.
\ \cite{KPW10, Pru03}.
Let $ \cE\subset V\cap X_1$ denote the set of  equilibrium solutions of (\ref{u-equation}), which means that
$$
u\in\cE \quad \mbox{ if and only if }\quad u\in V\cap X_1,
\; A(u)u=F(u).
$$
Given an  element $u_*\in\cE$,  we consider the situation that $u_*$ is
contained in an $m$-dimensional manifold of equilibria. This means that there
is an open subset $U\subset\R^m$, $0\in U$, and a $C^1$-function
$\Psi:U\rightarrow X_1$,  such that
\begin{equation}
\label{manifold}
\begin{aligned}
& \bullet\
\text{$\Psi(U)\subset \cE$ and $\Psi(0)=u_*$,} \\
& \bullet\
 \text{the rank of $\Psi^\prime(0)$ equals $m$, and} \\
& \bullet\
\text{$A(\Psi(\zeta))\Psi(\zeta)=F(\Psi(\zeta)),\quad \zeta\in U.$}
\end{aligned}
\end{equation}
Let $A_0:= A(u_*) +[A^\prime(u_*)\,\cdot\,]u_* -F^\prime(u_*)$ be the linearization of \eqref{u-equation} at $u_*\in\cE$.
An equilibrium  $u_*$ is called {\bf normally hyperbolic} if
\begin{itemize}
\item[(i)] near $u_*$ the set of equilibria $\cE$ is a $C^1$-manifold in $X_1$ of dimension $m\in\N$,
\vspace{-4mm}
\item[(ii)] \, the tangent space for $\cE$ at $u_*$ is given by $N(A_0)$,
\item[(iii)] \, $0$ is a semi-simple eigenvalue of
$A_0$, i.e.\ $ N(A_0)\oplus R(A_0)=X_0$,
\item[(iv)] \, $\sigma(A_0)\cap i\R =\{0\}$, $\sigma_u:=\sigma(A_0)\cap \C_-\neq\emptyset$.
\end{itemize}
An equilibrium is called {\bf normally stable} if (i), (ii), (iii) hold and (iv) is replaced by
$$(iv)_s \qquad  \sigma(A_0)\cap i\R =\{0\},\quad \sigma(A_0)\setminus\{0\}\subset \C_+,$$
which means that $\sigma_u=\emptyset$.
Note that a standard argument  in the theory of Fredholm operators implies that $R(A_0)$ is closed
and $A_0$ is a Fredholm operator of index 0, even if (iii) holds only algebraically,
see for instance \cite[p.78]{D85}. So in fact (iii) is also true topologically, and $\{0\}$ is an isolated point in $\sigma(A_0)$.

In the recent paper \cite{PSZ09} it is proved that any solution $u(t)$ starting near a normally hyperbolic $u_*\in\cE$ and staying near $\cE$ exists globally and converges in $X_\gamma$ to an equilibrium $u_\infty\in\cE$ which, however, may be different from $u_*$. In case $u_*$ is normally stable this is true for any solution starting near $u_*$. We call this the {\bf generalized principle of linearized stability}.
In that paper it is also shown that near $u_*$ there are no other equilibria
than those given by $\Psi(U)$,
i.e.\ $\cE\cap B_{X_1}(u_*,{r_1})=\Psi(U)$, for some $r_1>0$.
\smallskip

These situations appear frequently in applications, for example in problems with symmetries,
and in problems with moving boundaries,
see  for instance \cite{ES98a, ES98b, KPW12, PrSi08, PSW11, PSZ09, PSZ09b, PSZ10, PSZ12, Si01}.

Our intention in this paper is to study the behavior of the semiflow near $\cE$ in more detail. If $u_*$ is normally hyperbolic, then any $w\in\cE$ close to $u_*$ in $X_\gamma$ will be normally hyperbolic as well.
Therefore, intuitively, at each point $w\in\cE$ near $u_*$ there should be a stable manifold $\cM^s_w$ and an unstable manifold $\cM^u_w$ such that
$\cM^s_w\cap\cM^u_w\cap B(u_*,r)=\{w\}$, and these manifolds should depend continuously on $w\in B(u_*,r)\cap\cE$. The tangent spaces of these manifolds are expected to be the projections
with respect to the stable resp.\ unstable part of the spectrum of the linearization of \eqref{u-equation} at $w$.
\begin{figure}
    \centering
        \includegraphics[width=7cm, height=6cm]{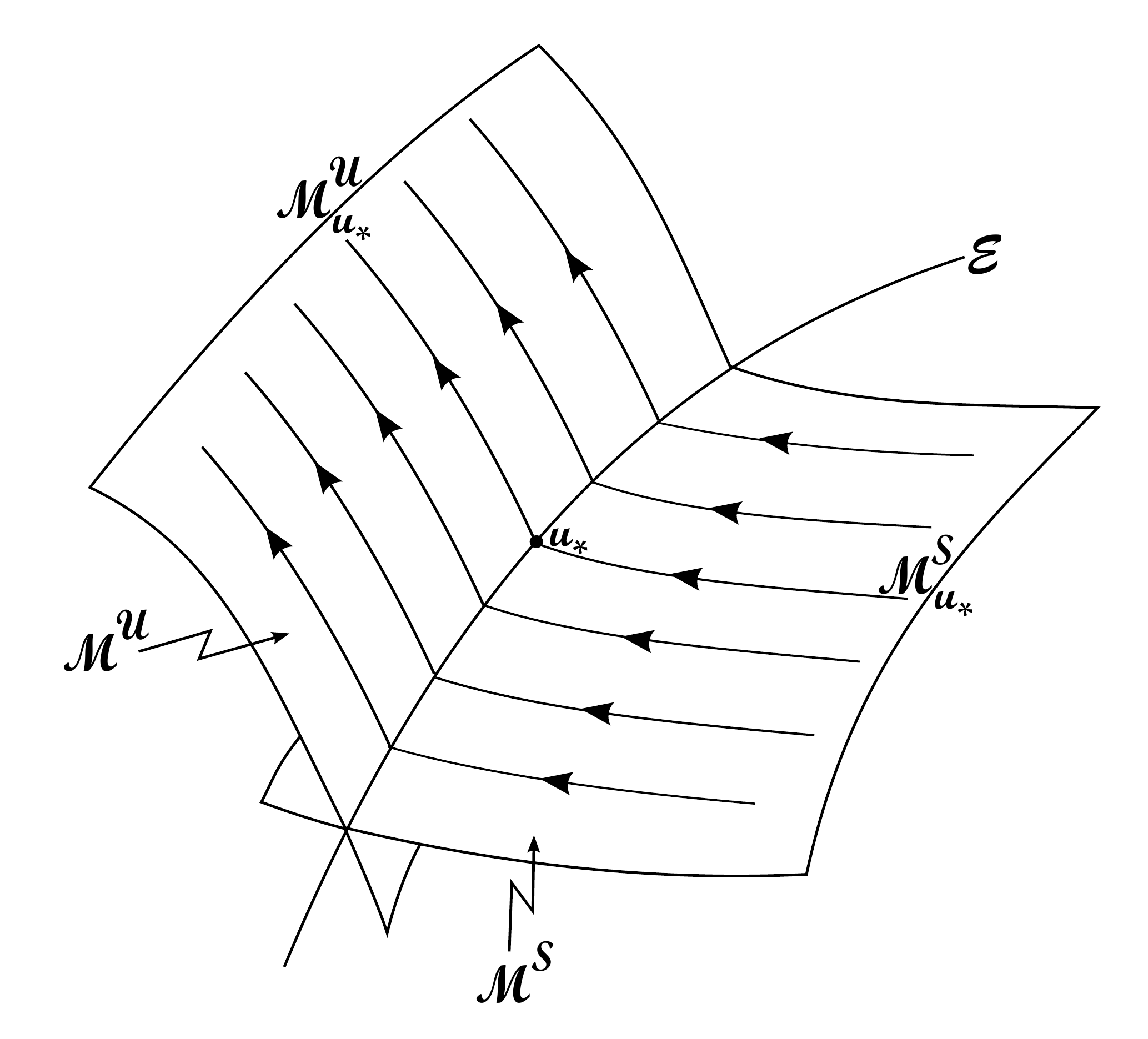}
          \caption{Foliation and fibers near $\mathcal{E}$.}
\end{figure}

We  prove these assertions below, and call it the {\bf stable} resp.\ {\bf unstable foliation} of \eqref{u-equation} near $u_*\in\cE$. The stable resp.\ unstable manifolds $\cM^s_w$ and $\cM^u_w$ are termed the {\bf leaves} or {\bf fibers} of these foliations. They turn out to be  positively and also negatively invariant under the semiflow. As a consequence, the convergent solutions are precisely those which start at initial values sitting on one of the stable fibers. In the normally stable case where $\sigma(A_0)\cap \C_-=\emptyset$, this implies that the stable foliation covers a neighborhood of $u_*$ in $V$.

The proofs we present here are quite simple and elementary, as they just employ the implicit function theorem, which is at our disposal thanks to maximal regularity.
We only need to require $C^1$-regularity for $(A,F)$ and for $\cE$,
in contrast to other work, where more regularity (at least $C^2$) is assumed.
We mention that the authors in \cite{BLZ00} announced a proof
for the $C^1$ case in a general context, see page 4644.

The fibers of these foliations will be manifolds of class $C^1$ as well, however, their dependence on $w\in\cE$ is only continuous: here we lose one degree of regularity. This is quite natural and should be compared to the loss of regularity which the normal field $\nu$ of a hyper-surface $\Gamma\subset\R^n$ of class $C^k$, $k\in\N$, suffers: it is only of class $C^{k-1}$. We also show that in case $(A,F)\in C^k$ the dependence on $w$ is of class $C^{k-1}$, for each $k\in\N\cup\{\infty,\omega\}$, where $\omega$ means real analytic.

\section{Preliminaries and the asymptotic normal form}
(a) Suppose that
the operator $A(u_*)$ has the property of maximal $L_p$-regularity.
Introducing  the deviation $v=u-u_*$ from the equilibrium $u_*$, the equation
for $v$ then reads as
\begin{equation}
\label{v-equation}
\dot{v}(t)+A_0v(t)=G(v(t)),\quad t>0,\quad v(0)=v_0,
\end{equation}
where
$v_0=u_0-u_*$
and
\begin{equation}
\label{A0}
A_0w=A(u_*)w+(A^\prime(u_*)w)u_*-F^\prime(u_*)w\quad\hbox{for }
w\in X_1.
\end{equation}
The function $G$ can be written as $G(v)=G_1(v)+G_2(v,v)$, where
\begin{equation*}
\begin{aligned}
G_1(v)&=(F(u_*+v)\!-\!F(u_*)\!-\!F^\prime(u_*)v)
\!-\!(A(u_*+v)\!-\!A(u_*)\!-\!A^\prime(u_*)v)u_*, \\
G_2(v,w)&=-(A(u_*+v)\!-\!A(u_*))w,\quad  w\in X_1,\;v\in V_\ast,
\end{aligned}
\end{equation*}
and $V_\ast:=V-u_\ast$. It follows from \eqref{AF}
that
$G_1\in C^1(V_\ast,X_0)$ and also that
$G_2\in C^1(V_\ast\times X_1,X_0)$.
Moreover, we have
\begin{equation}
\label{G(0)}
G_1(0)=G_2(0,0)=0,
\quad G_1^\prime(0)=G_2^\prime(0,0)=0,
\end{equation}
where $G^\prime_1$ and $G^\prime_2$ denote the Fr\'{e}chet
derivatives of $G_1$ and $G_2$, respectively.
\medskip\\
Setting
$\psi(\zeta)=\Psi(\zeta)-u_*$ results in the following equilibrium
equation for problem \eqref{v-equation}:
\begin{equation}
\label{equilibrium-psi}
A_0\psi(\zeta)=G(\psi(\zeta)),\quad \mbox{ for all }\;\zeta\in U.
\end{equation}
Taking the derivative with respect to $\zeta$ and using
the fact that $G^\prime(0)=0$ we conclude that
$A_0\psi^\prime(0)=0$ and this implies that
\begin{equation}
\label{tangent-space}
T_{u_\ast}(\cE)\subset N(A_0),
\end{equation}
where $T_{u_\ast}(\cE) $ denotes the tangent space of $\cE$ at
$u_\ast$.
Note that assumption (ii) yields equality in \eqref{tangent-space}, and (iii) implies that $0$ is an isolated
spectral point of $A_0$. According to
assumption (iv), $\sigma(A_0)$ admits a decomposition into three
disjoint  parts with
\begin{equation*}
\sigma(A_0)=\{0\}\cup \sigma_s \cup \sigma_u, \quad
-\sigma_u\cup\sigma_s\subset \C_+=\{z\in\C:\,{\rm Re}\, z>0\}.
\end{equation*}
The spectral set $\sigma_c:=\{0\}$ corresponds to the center part,
$\sigma_s$ to the stable one, and $\sigma_u$ to the unstable part of the analytic
$C_0$-semigroup $\{e^{-A_0 t}:t\ge 0\}$, or equivalently
of the Cauchy problem $\dot{w}+A_0w=f$.
\smallskip\\
\noindent In the normally stable case we have $\sigma_u=\emptyset$. Subsequently, we let $P^l$, $l\in\{c,s,u\}$, denote the
spectral projections according to the spectral sets $\sigma_c=\{0\}$, $\sigma_s$, and $\sigma_u$, and we set $X^l_j:=P^l X_j$ for $l\in\{c,s,u\}$ and
$j\in\{0,1,\gamma\}$. The spaces $X^l_j$ are equipped with the
norms $|\cdot| _j$ for $j\in\{0,1,\gamma\}$. We have the topological
direct decomposition
\begin{equation*}
X_1=X_1^c\oplus X_1^s\oplus X_1^u,\quad X_0=X_0^c\oplus X_0^s\oplus X_0^u,
\end{equation*}
and this decomposition reduces $A_0$ into $A_0=A_c\oplus A_s\oplus A_u$,
where  $A_l$  is the part of $A_0$ in $X^l_0$
for $l\in\{c,s,u\}$.
Since $\sigma_c=\{0\}$ and $\sigma_u$ is compact it follows that
$X_0^c\subset X_1$ and $X_0^u\subset X_1$.
Therefore, $X_0^l$ and $X_1^l$ coincide as
vector spaces for $l\in\{c,u\}$. In the following, we will
just write $X^l=(X^l,|\cdot|_l)$
for either of the spaces $X^l_0$ and $X^l_1$, $l\in\{c,u\}$.
We note that $X^c=N(A_0)$, and hence $A_c=0$.

The operator $A_s$
inherits the property of $L_p$-maximal regularity from $A_0$.
Since $\sigma(A_s)=\sigma_s\subset \C_+$,
we obtain that the Cauchy problem
\begin{equation}
\label{Cauchy}
\dot w + A_s w=f,\quad w(0)=0,
\end{equation}
also enjoys the property of maximal regularity,
even on the interval $J=(0,\infty)$.
In fact the following estimates are true: for any $a\in (0,\infty]$ let
\begin{equation}
\label{EE}
\EE_0(a)=L_p((0,a);X_0),
\quad \EE_1(a)=H^1_p((0,a);X_0)\cap L_p((0,a);X_1).
\end{equation}
The natural norms in $\EE_j(a)$ will be denoted by
$|\!|\cdot|\!|_{\EE_j(a)}$ for $j=0,1$. Then the  Cauchy problem \eqref{Cauchy} has for
each
 $f\in L_p((0,a);X_0^s)$ a unique solution
$$
w\in H^1_p((0,a);X^s_0)\cap L_p((0,a);X^s_1),
$$
and there exists a constant $M_0$ such that $|\!|w|\!|_{\EE_1(a)}\le M_0 |\!|f |\!|_{\EE_0(a)}$
for every $a>0$, and
every function $f\in L_p((0,a);X^s_0)$. In fact, since
$\sigma(A_s-\omega)$ is still contained in $\C_+$ for
$$0<\omega<\inf {\rm Re}\,\sigma_s$$
fixed, we see that the operator $(A_s-\omega)$ enjoys the same
properties as $A_s$. Therefore, every solution of the Cauchy problem
$\eqref{Cauchy}$ satisfies the estimate
\begin{equation}
\label{M0}
|\!|e^{\sigma t} w |\!|_{\EE_1(a)}
\le M_0 |\!|e^{\sigma t}f |\!|_{\EE_0(a)},
\quad \sigma\in [0,\omega], \quad a>0,
\end{equation}
for $f\in L_p((0,a);X^s_0)$, where $M_0=M_0(\omega)$ for $\omega>0$
chosen as above, see \cite[Theorem 2.4]{Do93} or \cite[Sec.\ 6]{Pru03}.
Furthermore, there exists a constant $M_1>0$ such that
\begin{equation}
\label{M1}
|\!|e^{\omega t}e^{-A_s t}P^su|\!|_{\EE_1(a)}
+\sup_{t\in [0,a)}|e^{\omega t}e^{-A_st}P^su|_\gamma\le M_1|P^su|_\gamma\,
\end{equation}
for every $u\in X_\gamma$ and $a\in (0,\infty]$.
For future use we note that
\begin{equation}
\label{zero-trace}
\sup_{t\in [0,a)}|w(t)|_\gamma\le c_0
\no w\no_{\EE_1(a)}
\quad\text{for all $w\in \EE_1(a)$ with $w(0)=0$}
\end{equation}
with a constant $c_0$ that is independent of $a\in (0,\infty]$.
\medskip\\
\noindent
In the following we assume that
$$0<\omega<\min\{\inf {\rm Re}\,\sigma_s, -\sup{\rm Re}\, \sigma_u\}.$$
Turning our attention to $A_u$, we observe that $A_u$ is bounded, and hence
$e^{-tA_u}$ extends to a group $\{e^{-A_u t}:t\in\R\}$ on  $X^u$ which satisfies
the estimate
\begin{equation}\label{Au-bound}
|e^{-A_u t}|\leq M_\omega e^{\omega t},\quad t\leq 0.
\end{equation}
We emphasize that $L_p$-maximal regularity of $A(u_*)$ implies by standard perturbation theory
that $A(u)$ has this property as well, for each $u$ in some ball
$\bar{B}_{X_\gamma}(u_\ast,\rho_0)$, and so all estimates in this paragraph are uniform w.r.t.\ $u\in \bar{B}_{X_\gamma}(u_\ast,\rho_0)$.\smallskip\\
\smallskip
\noindent
(b)
Let us now consider the mapping
\begin{equation*}
g:U\subset \R^m\to X^c,\quad
g(\zeta):=P^c\psi(\zeta),\quad \zeta\in U.
\end{equation*}
It follows from our assumptions that
$g^\prime(0)=P^c\psi^\prime(0):\R^m\to X^c$
is an isomorphism
(between the finite dimensional spaces $\R^m$ and $X^c$).
By the inverse function theorem, $g$ is a
$C^1$-diffeomorphism of a neighborhood of $0$ in $\R^m$
into a neighborhood, say $B_{X^c}(0,\rho_0)$, of $0$ in $X^c$.
Let $g^{-1}:B_{X^c}(0,\rho_0)\to U$ be the inverse mapping.
Then $g^{-1}:B_{X^c}(0,\rho_0)\to U$ is $C^1$ and $g^{-1}(0)=0$.
Next we set
$\Phi(x):=\psi(g^{-1}(x))$ for $x\in B_{X^c}(0,\rho_0)$
and we note that
\begin{equation*}
\Phi\in C^1(B_{X^c}(0,\rho_0),X_1), \quad
\Phi(0)=0,
\quad \{\Phi(x)+u_\ast \st x\in B_{X^c}(0,\rho_0)\}=\cE\cap W,
\end{equation*}
where $W$ is an appropriate neighborhood of $u_\ast$ in $X_1$.
One readily verifies that
\begin{equation*}
P^c \Phi(x)=((P^c\circ \psi)\circ g^{-1})(x)=
(g\circ g^{-1})(x)=x,\quad x\in  B_{X^c}(0,\rho_0),
\end{equation*}
and this yields
$\Phi(x)=P^c\Phi(x)+P^s\Phi(x)+P^u\Phi(x)=x+P^s\Phi(x)+P^u\Phi(x)$ for
$x\in B_{X^c}(0,\rho_0)$.
Setting $\phi_l(x):=P^l\Phi(x)$, $l\in\{s,u\}$, we conclude that
\begin{equation}
\label{phi}
\phi_j\in C^1(B_{X^c}(0,\rho_0),X_1^j),\quad \phi_j(0)=\phi_j^\prime (0)=0,
\end{equation}
and with $\phi:=\phi_s+\phi_u$ that
$$
\{x+\phi(x)+u_\ast \st x\in B_{X^c}(0,\rho_0)\}=\cE\cap W.
$$
This shows that the manifold $\cE$
can be represented
as the (translated) graph of the function $\phi$ in a neighborhood
of $u_\ast$. Moreover,
the tangent space of $\cE$ at $u_\ast$
coincides with $N(A_0)=X^c$.
By applying the projections $P^l$, $l\in\{c,s,u\}$, to equation \eqref{equilibrium-psi}
and using that $x+\phi(x)=\psi(g^{-1}(x))$
for $x\in B_{X^c}(0,\rho_0)$, and that $A_c\equiv 0$,
we obtain the following equivalent system of equations
for the equilibria of \eqref{v-equation}:
\begin{equation}
\label{equilibria-phi}
\begin{split}
&P^cG(x+\phi(x))=0, \\
&P^l G(x+\phi(x))=A_l\phi_l(x),
\quad x\in B_{X_c}(0,\rho_0),\;\; l\in\{s,u\}.
\end{split}
\end{equation}
(c) Introducing the new variables
\begin{equation*}
\begin{aligned}
&x=P^c v-\xi=P^c (u-u_*)-\xi, \\
&y=P^sv-\phi_s(\xi)=P^s(u-u_*)-\phi_s(\xi),\\
&z=P^uv-\phi_u(\xi)=P^u(u-u_*)-\phi_u(\xi),\\
\end{aligned}
\end{equation*}
where $\xi\in B_{X^c}(0,\rho_0)$, we
obtain the following system of evolution equations
in $X^c\times X^s_0\times X^u$:
\begin{equation}
\label{system}
\left\{
\begin{aligned}
\dot{x}=R_c(x,y,z,\xi),      \qquad &x(0)=x_0-\xi, \\
\dot{y}+A_sy=R_s(x,y,z,\xi), \qquad &y(0)=y_0-\phi_s(\xi),\\
\dot{z}+A_uz=R_u(x,y,z,\xi), \qquad &z(0)=z_0-\phi_u(\xi),\\
\end{aligned}
\right.
\end{equation}
with $x_0=P^cv_0$, $y_0=P^sv_0$, $z_0=P^uv_0$, and
\begin{equation*}
\begin{aligned}
&R_c(x,y,z,\xi)=P^c G(x+y+z+\xi+\phi(\xi)), \\
&R_l(x,y,z,\xi)=P^lG(x+y+z+\xi+\phi(\xi))-A_l\phi_l(\xi)\\
\end{aligned}
\end{equation*}
for $l\in\{s,u\}$.
Using the equilibrium equations \eqref{equilibria-phi}, the
expressions for $R_l$ can be rewritten as
\begin{equation}
\label{R-T}
\begin{aligned}
&R_l(x,y,z,\xi)=P^l\big(G(x+y+z+\xi+\phi(\xi))-G(\xi+\phi(\xi))\big),
\end{aligned}
\end{equation}
where $l\in\{c,s,u\}$.
Although the term $P^cG(\xi+\phi(\xi))$ in $R_c$ is zero, see
\eqref{equilibria-phi}, we include it here for reasons of symmetry. Equation \eqref{R-T}
immediately yields
\begin{equation*}
\label{R=T=0}
R_c(0,0,0,\xi)=R_s(0,0,0,\xi)=R_u(0,0,0,\xi)=0\quad\text{for all }\ \xi\in B_{X^c}(0,\rho_0),
\end{equation*}
showing that
the equilibrium set $\cE$ of \eqref{u-equation}
near $u_*$ has been reduced to the set
$\{0\}\times \{0\}\times\{0\}\times B_{X^c}(0,\rho_0)\subset X^c\times X^s_1\times X^u\times X^c$.
\medskip\\
Observe also that there is a unique correspondence between the solutions of \eqref{u-equation}
close  to $u_*$ in $X_\gamma$ and those of (\ref{system}) close to $0$.
As $u_\infty:=u_*+\xi + \phi(\xi)\in\cE$ will be the limit of $u(t)$ in $X_\gamma$ as $t\to\infty$,
we call  system \eqref{system} the {\em asymptotic normal form}
of \eqref{u-equation} near
its normally stable equilibrium $u_*$.


\section{The stable foliation}
To motivate our approach to the construction of the stable foliation we formally define a map
$H_s$ according to
\begin{align}
\label{stableF}
H_s((x,y,z),(y_0,\xi))(t)=
\left[\begin{array}{l}
x(t)+\int_t^\infty R_c(x(\tau),y(\tau),z(\tau),\xi)\,d\tau\\
\vspace{-2mm}\\
y(t)-L_s(R_s(x,y,z,\xi),y_0-\phi_s(\xi))\\
\vspace{-2mm}\\
z(t)+\int_t^\infty e^{-A_u(t-\tau)}R_u(x(\tau),y(\tau),z(\tau),\xi)\,d\tau \\
\end{array}\right],
\end{align}
where $t>0$.
Here $w=L_s(f,y_0)$ denotes the unique solution of the problem
$$\dot{w}(t)+A_sw(t)=f(t),\; t>0,\quad w(0)=y_0.$$
Obviously, we have $H_s(0,0)=0$. Moreover, $H_s$ will be of class $C^1$ w.r.t.\ to the
variables $(x,y,z,y_0)$, but in general only continuous in $\xi$.
The derivative of $H_s$ w.r.t.\ $(x,y,z)$ at (0,0) is given by
$ D_{(x,y,z)}H_s(0,0)= I$, and hence the implicit function theorem formally applies and yields
a map
\begin{equation}
\label{Xi-s}
\Lambda^s:(y_0,\xi)\mapsto (x,y,z)
\end{equation}
which is well-defined near 0, such that $H_s(\Lambda^s(y_0,\xi),(y_0,\xi))=0$.
$\Lambda^s$ will be continuous in $(y_0,\xi)$, and of class $C^1$ w.r.t.\ $y_0$.
Given $(x,y,z)=\Lambda^s(y_0,\xi)$, we set
\begin{equation}
\label{IV}
\begin{split}
x_0:=& -\int_0^\infty R_c(x(\tau),y(\tau),z(\tau),\xi)\,d\tau + \xi,\\
z_0:=& -\int_0^\infty e^{A_u\tau}R_u(x(\tau),y(\tau),z(\tau),\xi)\,d\tau + \phi_u(\xi).
\end{split}
\end{equation}
For $(x,y,z)=\Lambda^s(y_0,\xi)$ given,
the first component of $H_s$ yields
$$\dot{x}(t)=R_c(x(t),y(t),z(t),\xi),\quad t\geq0,$$
 the second component implies
 $$\dot{y}(t)+A_sy(t)=R_s(x(t),y(t),z(t),\xi), \quad t>0,\quad y(0)=y_0-\phi_s(\xi),$$
and the third one leads to
$$\dot{z}(t)+A_uz(t)=R_u(x(t),y(t),z(t),\xi),\quad t>0.$$
Moreover, due to \eqref{IV}, the initial values of $x$ and $z$ are given by
\begin{equation*}
x(0)=x_0-\xi,\quad z(0)=z_0-\phi_u(\xi).
\end{equation*}
Assuming, in addition, that $(x(t),y(t),z(t))$ converges to $(0,0,0)$ as $t\to \infty$,
we conclude that
\begin{equation*}
u(t)= u_*+x(t)+y(t)+z(t)+\xi+ \phi(\xi),\quad t>0,
\end{equation*}
is a solution of \eqref{u-equation}
 with $\lim_{t\to \infty} u(t)=u_\infty:=u_*+\xi+\phi(\xi)\in\cE$.
The map
\begin{equation}
\label{xi-s}
\lambda^s: (y_0,\xi)\mapsto u(0)= u_*+x(0)+y(0)+z(0) +\xi+ \phi(\xi)
\end{equation}
yields a foliation of the stable manifold $\cM^s$ near $u_*$, and
\begin{equation}
\label{fiber-s}
\cM_{\xi}^s:=\{\lambda^s(y_0,\xi):\,y_0\in B_{X^s}(0,r)\}
\end{equation}
are the fibers over $B_{X^c}(0,r)$,
or  equivalently over $\cE$ near $u_*$. We note that the fibers are $C^1$-manifolds, but they will depend only continuously on $\xi$, or equivalently on $\cE$.

The strategy of our approach can be summarized as follows:
given a base point $u_\ast + \xi+\phi(\xi)$ on the manifold $\cE$,
and an initial value $y_0\in X^s_\gamma$, we determine with the help of the implicit function
theorem an initial value $u_0$ and a solution $u(t)$ such that
$u(t)$ converges to the base point $u_\ast + \xi+\phi(\xi)$ exponentially fast.
Exponential convergence will be obtained by setting up the implicit function theorem
in a space of exponentially decaying functions.

After these heuristic considerations we can state our first main result, employing the notation introduced above.
\medskip
\begin{theorem}
\label{stable-foliation}
Consider \eqref{u-equation} under the assumption \eqref{AF}, and let $u_*\in \cE$ be a normally hyperbolic equilibrium. Suppose further that $A(u_\ast)$ has the property of $L_p$-maximal regularity.
Then there is a number $r>0$ and a continuous map
\begin{equation*}
\lambda^s:B_{X_\gamma^s}(0,r)\times B_{X^c}(0,r) \to X_\gamma\quad\text{with}\quad
\lambda^s(0,0)=u_*,
\end{equation*}
the \textbf{stable foliation}, such that
the solution $u(t)$ of \eqref{u-equation} with initial value $\lambda^s(y_0,\xi)$ exists on $\R_+$ and converges to $u_\infty:=u_*+\xi+\phi(\xi)$ in $X_\gamma$ exponentially fast as $t\to\infty$.
The image of $\lambda^s$ defines the stable manifold $\cM^s$ of \eqref{u-equation} near $u_*$.
\smallskip\\
Furthermore, for fixed $\xi\in B_{X^c}(0,r)$, the function
\begin{equation*}
\lambda^s_{\xi}:B_{X^s_\gamma}(0,r)\to X_\gamma, \quad\text{given by}\quad
\lambda^s_{\xi}(y_0)=\lambda^s(y_0,\xi),
\end{equation*}
defines the fibers $\cM^s_{\xi}:=\lambda^s_{\xi}(B_{X^s_\gamma}(0,r))$ of the foliation.
Moreover, we have
\begin{itemize}
\item[(i)]\,  an initial value $u_0\in X_\gamma$ near $u_*$ belongs to $\cM^s$ if and only if the solution $u(t)$ of \eqref{u-equation} exists globally on $\R_+$ and converges to some  $u_\infty\in\cE$
exponentially fast as $t\to\infty$;
\item[(ii)] \, $\lambda^s_{\xi}$ is of class $C^1$, and the derivative
$D_{y_0}\lambda^s$ is continuous, jointly in $(y_0,\xi)$;
\item[(iii)] \,the fibers are $C^1$-manifolds which are invariant under the semiflow generated
by \eqref{u-equation};
\item[(iv)] \, the tangent space of $\cM^s_{u_\infty}$ at $u_\infty\in\cE$ is precisely the projection of the stable part of the linearization of \eqref{u-equation} at $u_\infty$;
\item[(v)] \, if $u_*$ is normally stable then $\cM^s$ forms a neighborhood of $u_*$ in $X_\gamma$.
\end{itemize}
\end{theorem}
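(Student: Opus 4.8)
The plan is to turn the formal construction into a rigorous fixed point problem for $H_s$ posed in spaces of exponentially decaying functions, and to solve $H_s((x,y,z),(y_0,\xi))=0$ by the implicit function theorem, which is available thanks to maximal regularity. With $\omega>0$ fixed as in Section~2, I would work in
\[
\FF(\omega):=\{(x,y,z): e^{\omega\cdot}x\in H^1_p(\R_+;X^c),\; e^{\omega\cdot}y\in\EE_1(\infty),\; e^{\omega\cdot}z\in H^1_p(\R_+;X^u)\},
\]
normed by the sum of the corresponding weighted norms, and let $(y_0,\xi)$ range over $B_{X^s_\gamma}(0,r)\times B_{X^c}(0,r)$. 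Elements of $\FF(\omega)$ satisfy $e^{\omega\cdot}(x,y,z)\in BUC(\R_+;X_\gamma)\cap L_p(\R_+;X_\gamma)$, so that $G$ may be evaluated pointwise along them, with the same exponential decay reproduced.

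The first task is to show that $H_s$ maps $\FF(\omega)\times(B_{X^s_\gamma}(0,r)\times B_{X^c}(0,r))$ into $\FF(\omega)$. The decisive point is the subtraction built into \eqref{R-T}: writing $R_\ell(x,y,z,\xi)(t)=P^\ell[G(w(t)+\xi+\phi(\xi))-G(\xi+\phi(\xi))]$ with $w=x+y+z$ and applying the mean value theorem gives $|R_\ell(\cdot)(t)|_0\le C|w(t)|_\gamma$, so that $e^{\omega\cdot}R_\ell\in L_p(\R_+;X^\ell_0)$ whenever $(x,y,z)\in\FF(\omega)$. The weighted membership of the three components of $H_s$ then follows: for the center and unstable parts from an elementary convolution (Young) estimate for $\int_t^\infty(\cdot)\,d\tau$ together with \eqref{Au-bound}, and for the stable part from the maximal regularity estimate \eqref{M0}, the estimate \eqref{M1} for the contribution of $y_0-\phi_s(\xi)$, and the trace estimate \eqref{zero-trace}. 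In particular $H_s(0,0)=0$.

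Next I would verify the hypotheses of the implicit function theorem, and here lies the \emph{main obstacle}: that $H_s$ is of class $C^1$ in $(x,y,z,y_0)$. Since $y_0$ enters only affinely through $L_s$, and since $\EE_1(\infty)\hookrightarrow BUC(\R_+;X_\gamma)$ while $G\in C^1(V_\ast,X_0)$, the superposition operators $(x,y,z)\mapsto R_\ell(x,y,z,\xi)$ should be continuously differentiable from $\FF(\omega)$ into the weighted data spaces, with derivative $h\mapsto P^\ell G'(w(\cdot)+\xi+\phi(\xi))h(\cdot)$; the exponential decay of the increment $h$ itself absorbs the weight, and uniform continuity of $G'$ on compact sets supplies the remainder estimate. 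Differentiability of Nemytskii operators is delicate and must be arranged so that mere $C^1$-regularity of $G$ suffices (the continuous embedding into $X_\gamma$-valued functions is what makes this possible). The same computation explains why only continuity in $\xi$ is available: differentiating in $\xi$ forces differentiation in the constant, non-decaying direction $\xi+\phi(\xi)$, so the candidate derivative contains the factor $G'(w(\cdot)+\xi+\phi(\xi))-G'(\xi+\phi(\xi))$, which for merely continuous $G'$ need not decay at the rate $\omega$ and hence need not lie in the weighted data space. Finally $G'(0)=0$ yields $D_{(x,y,z)}H_s(0,0)=I$ on $\FF(\omega)$, so the implicit function theorem (in the form permitting $C^1$-dependence on $y_0$ and continuous dependence on the parameter $\xi$) produces, after shrinking $r$, a map $\Lambda^s(y_0,\xi)=(x,y,z)$ with $\Lambda^s(0,0)=0$, continuous in $(y_0,\xi)$, of class $C^1$ in $y_0$ with $D_{y_0}\Lambda^s$ jointly continuous; then $\lambda^s$ and the initial values \eqref{IV}, \eqref{xi-s} are read off directly.

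The remaining assertions are consequences. Reading the three components of $H_s=0$ exactly as in the lines preceding the theorem shows that $u(t)=u_*+x(t)+y(t)+z(t)+\xi+\phi(\xi)$ solves \eqref{u-equation}, has initial value $\lambda^s(y_0,\xi)$, and converges to $u_\infty=u_*+\xi+\phi(\xi)$ at rate $\omega$; this gives the main statement and the ``only if'' part of (i). For the ``if'' part, any solution on $\R_+$ converging exponentially to some $u_\infty=u_*+\xi+\phi(\xi)\in\cE$, when written in the coordinates \eqref{system}, is forced onto the fixed point equation: exponential decay singles out $x(t)=-\int_t^\infty R_c\,d\tau$ and $z(t)=-\int_t^\infty e^{-A_u(t-\tau)}R_u\,d\tau$ as the only decaying solutions of the center and unstable equations (the latter by \eqref{Au-bound}), while $y=L_s(R_s,y_0-\phi_s(\xi))$; uniqueness in the implicit function theorem then identifies $(x,y,z)=\Lambda^s(y_0,\xi)$ and $u_0=\lambda^s(y_0,\xi)\in\cM^s$. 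Assertion (ii) is immediate from the regularity of $\Lambda^s$; for (iii), the time-$t$ shift of a fibre solution still converges to $u_\infty$ at rate $\omega$, so by (i) it lies on $\cM^s_\xi$ (positive invariance), while $\lambda^s_\xi$ is a $C^1$-immersion, its differential being a small perturbation of the inclusion $X^s_\gamma\hookrightarrow X_\gamma$, so the fibre is a $C^1$-manifold. For (iv) I would differentiate \eqref{xi-s} in $y_0$ at the base point, using $G'(0)=0$ to kill the integral terms, obtaining $T_{u_*}\cM^s_0=X^s_\gamma=P^sX_\gamma$, and then re-centre the whole construction at the nearby normally hyperbolic equilibrium $u_\infty$ to identify the tangent space with its stable spectral projection. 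Finally, in the normally stable case $\sigma_u=\emptyset$ the $z$-component drops out and $\lambda^s(y_0,\xi)=u_*+\xi+y_0-\int_0^\infty R_c\,d\tau$ is a Lipschitz-small perturbation of the isomorphism $(y_0,\xi)\mapsto\xi+y_0$ of $X^c\oplus X^s_\gamma=X_\gamma$, so a contraction argument shows its image covers a full neighborhood of $u_*$, proving (v).
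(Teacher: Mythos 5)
Your construction is essentially the paper's: the same map $H_s$ from \eqref{stableF}, posed on exponentially weighted maximal-regularity spaces, solved by the implicit function theorem with $C^1$-dependence on $(x,y,z,y_0)$ and only continuous dependence on the parameter $\xi$, with $D_{(x,y,z)}H_s(0,0)=I$ coming from $G(0)=G'(0)=0$, and with the remaining assertions read off from the fixed-point equation, time-translation invariance, and re-centering the construction at $u_\infty$. Your explanation of \emph{why} differentiability in $\xi$ is unavailable (the non-decaying direction $\xi+\phi(\xi)$ entering $G'$) is in fact more explicit than what the paper writes down.

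The one place you genuinely diverge is assertion (v), and there your argument has a gap. You present $\lambda^s(y_0,\xi)=u_*+\xi+y_0-\int_0^\infty R_c\,d\tau$ as a ``Lipschitz-small perturbation'' of the isomorphism $(y_0,\xi)\mapsto u_*+\xi+y_0$ of $X^s_\gamma\oplus X^c=X_\gamma$ and invoke a contraction argument to cover a neighborhood. But the perturbation term depends on $\xi$ through $\Lambda^s(y_0,\xi)$, and the version of the implicit function theorem you (and the paper) must use delivers only \emph{continuity} of $\Lambda^s$ in $\xi$ --- no Lipschitz estimate in that variable --- so the contraction does not close; a Schauder-type substitute is blocked by the infinite-dimensionality of $X^s_\gamma$. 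The paper sidesteps this entirely by quoting the generalized principle of linearized stability from \cite{PSZ09}: in the normally stable case every solution starting in a small ball $B_{X_\gamma}(u_*,r_0)$ exists globally and converges to some $u_\infty\in\cE$ exponentially fast, so by the uniqueness clause of the implicit function theorem every such initial value already lies on a stable fiber, whence $\cM^s$ is a neighborhood of $u_*$. (The same citation plus uniqueness is how the paper argues the ``if'' direction of (i); your direct verification that an exponentially converging solution is forced onto the fixed-point equation is a legitimate, somewhat more self-contained alternative there.) Either repair (v) by importing the convergence theorem of \cite{PSZ09}, or supply an argument for surjectivity that does not require Lipschitz dependence on $\xi$.
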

\begin{proof}
For fixed $\sigma\in(0,\omega]$ we define the function spaces
\begin{equation*}
\begin{aligned}
\FF_0^l(\sigma):&=\{f: e^{\sigma t}f\in L_p(\R_+;X_0^l)\},\\
\FF_1^l(\sigma):&=\{w: e^{\sigma t}w\in H^1_p(\R_+;X_0^l)\cap L_p(\R_+;X_1^l)\},
\quad l\in\{c,s,u\},
\end{aligned}
\end{equation*}
equipped with their natural norms.
Then we set
\begin{equation*}
\YY:=\YY(\sigma):= \FF_1^c(\sigma)\times\FF_1^s(\sigma)\times \FF_1^u(\sigma),
\quad Z:=X^s_\gamma\times X^c,
\end{equation*}
and we define
$
H_s:B_{\YY}(0,\rho)\times B_{Z}(0,\rho)\to \YY
$
 by \eqref{stableF}.
 Observe that $H_s$ is the composition of the substitution operator
\begin{equation*}
R(x,y,z,\xi)=[(R_c,R_s,R_u)(x+y+z+\xi+\phi(\xi)),\phi_s(\xi)]^{\sf T},
\end{equation*}
which maps $B_\YY(0,\rho)\times B_Z(0,\rho)$ into
$\FF_0^c(\sigma)\times\FF_0^s(\sigma)\times \FF_0^u(\sigma)\times X_\gamma^s$,
and the bounded linear operator
$$\LL(x,y,z,y_0,\xi,R_1,R_2,R_3,R_4) =
\left[\begin{array}{l} x(t)+\int_t^\infty R_1(\tau),d\tau\\
                       \vspace{-2mm}\\
                       y(t)-L_s(R_2,y_0-R_4)\\
                       \vspace{-2mm}\\
                       z(t)+\int_t^\infty e^{-A_u(t-\tau)}R_3(\tau)\,d\tau\\
                       \end{array}\right],$$
which maps
\begin{equation*}
\YY\times Z \times \FF_0^c(\sigma)\times\FF_0^s(\sigma)\times \FF_0^u(\sigma)\times X_\gamma^s
\quad \text{into}\quad \YY.
\end{equation*}
In order to see that the integral operator
$[R_1\mapsto \int_t^\infty R_1(\tau)\,d\tau]$ maps
$\FF_0^c(\sigma)$ into $\FF_1^c(\sigma)$
we set
$$ (KR_1)(t):=\int_t^\infty R_1(\tau)\,d\tau,\quad R_1\in \FF^c_0(\sigma).$$
Clearly,
$
e^{\delta t}(Kg)(t)=\int_t^\infty e^{\delta(t-\tau)}e^{\delta\tau}g(\tau)\,d\tau,
$
and
Young's inequality for convolution integrals readily yields
\begin{equation*}
K\in \cB\big(\FF^c_0(\sigma),\FF^c_1(\sigma)).
\end{equation*}
Similar arguments also apply for the term $\int_t^\infty e^{-A_u(t-\tau)}R_3(\tau)\,d\tau$.
\medskip\\
As $G$ is of class $C^1$, it is not difficult to see that
$R$ is $C^1$ with respect to the variables $(x,y,z,y_0)$,
but in general only continuous with respect to $\xi$.
This, in turn, implies that
$H_s$ is of class $C^1$ w.r.t.\ $(x,y,z,y_0)$,  and continuous in $\xi$.
In addition, as $G(0)=G^\prime(0)=0$, we obtain
\begin{equation*}
H_s(0,0)=0,\quad D_{(x,y,z)}H_s(0,0)=I_{\YY}.
\end{equation*}
Therefore, by the implicit function theorem, see \cite[Theorem 15.1]{D85},
there is a radius $r>0$ and a continuous map
$\Lambda^s:B_Z(0,r)\to \YY$ such that
$$H_s(\Lambda^s(y_0,\xi),(y_0,\xi))=0,\quad \mbox{ for all } \;  (y_0,\xi)\in B_Z(0,r),$$
and there is no other solution of
$H_s((x,y,z),(y_0,\xi))=0$ in the ball $B_\YY(0,r)\times B_Z(0,r)$.
Moreover, $\Lambda^s$ is also $C^1$ w.r.t.\ $y_0$, and one shows that
\begin{equation}
\label{Dy-Xi}
D_{y_0}\Lambda^s(0,0)w_0=[0,e^{-tA_s}w_0,0]^{\sf T}.
\end{equation}
 We may now continue as indicated in the heuristic considerations preceding
Theorem \ref{stable-foliation} to define the stable manifold $\cM^s$ and their fibers $\cM^s_{\xi}$ near $u_*$, see \eqref{xi-s}--\eqref{fiber-s}.
Clearly, the fibers are positively and negatively invariant under the semiflow, as \eqref{u-equation} is invariant concerning time-translations. As $\lambda_{\xi}^s$ is of class $C^1$, it is clear that the fibers are $C^1$-manifolds, parameterized over $X^s_\gamma$.
In fact, the fibers are diffeomorphic,  which can be seen by interchanging the roles
of $u_*$ and $u_\infty=u_*+\xi+\phi(\xi)$.
\smallskip\\
It follows from \eqref{Dy-Xi} that
$D_{y_0}\lambda^s(0,0)=I_{X^s_\gamma}$.
Interchanging the roles of $u_*$ and $u_\infty$, it becomes clear that the tangent space of the fiber $\cM_{\xi}^s$ at 0 is precisely the projection of the stable part of the linearization
$A_\infty=A(u_\infty) +[A^\prime(u_\infty)\cdot]u_\infty-F^\prime(u_\infty)$
of \eqref{u-equation} at $u_\infty\in\cE$,
yielding assertion (iv).
\smallskip\\
To obtain the characterization of $\cM^s$, observe that we proved in \cite{PSZ09} that there are balls $B_{X_\gamma}(u_*,r_0)$ and $B_{X_\gamma}(u_\ast,r_1)$ such that any solution of \eqref{u-equation} starting in $B_{X_\gamma}(u_*,r_0)$ and staying near $\cE$ stays in $B_{X_\gamma}(u_\ast,r_1)$ and converges to an equilibrium $u_\infty\in\cE$ exponentially fast. This implies that its initial value must belong to $\cM^s$ by uniqueness of $\Lambda^s$.

If $u_*\in\cE$ is normally stable then, as proved in \cite{PSZ09},
there are balls $B_{X_\gamma}(u_*,r_0)$ and $B_{X_\gamma}(u_\ast,r_1)$ such that any solution of \eqref{u-equation} starting in $B_{X_\gamma}(u_*,r_0)$
stays in $B_{X_\gamma}(u_\ast,r_1)$ and converges to an equilibrium $u_\infty\in\cE$ exponentially fast. This implies that $\cM^s$ forms a neighborhood of $u_*$ in $X_\gamma$,
thus establishing assertion (v) of the theorem.
\end{proof}
\section{The unstable foliation}
Our second main result concerning the unstable foliation of $\cE$ reads as follows.

\begin{theorem}
\label{unstable-foliation}
Consider \eqref{u-equation} under the assumption \eqref{AF}, and let $u_*\in \cE$ be a normally hyperbolic equilibrium. Suppose further that $A(u_\ast)$ has the property of $L_p$-maximal regularity.
Then there is a number $r>0$ and a continuous map
\begin{equation*}
\lambda^u:B_{X_\gamma^u}(0,r)\times B_{X^c}(0,r) \to X_\gamma\quad\text{with}\quad
\lambda^u(0,0)=u_*,
\end{equation*}
the \textbf{unstable foliation}, such that
the solution $u(t)$ of \eqref{u-equation} with initial value $\lambda^u(y_0,\xi)$ exists on $\R_-$ and converges to $u_\infty:=u_*+\xi+\phi(\xi)$ in $X_\gamma$ exponentially fast as $t\to -\infty$.
The image of $\lambda^u$ defines the unstable manifold $\cM^u$ of \eqref{u-equation} near $u_*$.
\\
Furthermore, for fixed $\xi\in B_{X^c}(0,r)$, the function
\begin{equation*}
\lambda^u_{\xi}:B_{X^u_\gamma}(0,r)\to X_\gamma, \quad\text{given by}\quad
\lambda^u_{\xi}(y_0)=\lambda^u(y_0,\xi),
\end{equation*}
defines the fibers $\cM^u_{\xi}:=\lambda^u_{\xi}(B_{X^u_\gamma}(0,r))$ of the foliation.
Moreover, we have
\begin{itemize}
\item[(i)]\,  an initial value $u_0\in X_\gamma$ near $u_*$ belongs to $\cM^u$ if and only if the solution $u(t)$ of \eqref{u-equation} exists globally on $\R_-$ and converges to some  $u_\infty\in\cE$
exponentially fast as $t\to -\infty$;
\item[(ii)] \, $\lambda^u_{\xi}$ is of class $C^1$, and the derivative
$D_{y_0}\lambda^u$ is continuous, jointly in $(y_0,\xi)$;
\item[(iii)] \,the fibers are $C^1$-manifolds which are invariant under the semiflow generated
by \eqref{u-equation};
\item[(iv)] \, the tangent space of $\cM^u_{\infty}$ at $u_\infty\in\cE$ is precisely the projection of the unstable part of the linearization of \eqref{u-equation} at $u_\infty$.
\end{itemize}
\end{theorem}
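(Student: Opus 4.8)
My plan is to repeat the construction of \thmref{stable-foliation} almost verbatim, interchanging the roles of the stable and unstable subspaces and reversing the time direction. Since $\tilde u(t)=u(-t)$ would solve a backward (ill-posed) parabolic equation, one cannot simply apply \thmref{stable-foliation} to a transformed problem; instead I would set up the implicit function theorem directly on the half-line $\R_-$. Concretely, for fixed $\sigma\in(0,\omega]$ I would introduce the spaces $\FF_0^l(\sigma)$ and $\FF_1^l(\sigma)$, $l\in\{c,s,u\}$, of functions on $\R_-$ for which $e^{-\sigma t}f$ lies in the corresponding $L_p$- resp.\ maximal-regularity space; the weight $e^{-\sigma t}$ now forces decay like $e^{\sigma t}$ as $t\to-\infty$. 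On $\YY:=\FF_1^c(\sigma)\times\FF_1^s(\sigma)\times\FF_1^u(\sigma)$ and $Z:=X^u_\gamma\times X^c$ I would define the analogue $H_u$ of \eqref{stableF}, whose three components read $x(t)-\int_{-\infty}^t R_c\,d\tau$, then $y(t)-\int_{-\infty}^t e^{-A_s(t-\tau)}R_s\,d\tau$, and finally $z(t)-L_u(R_u,y_0-\phi_u(\xi))$, where $L_u$ denotes the solution operator of $\dot w+A_uw=f$, $w(0)=y_0-\phi_u(\xi)$, and $y_0\in X^u_\gamma$ is the prescribed unstable datum.

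As in the stable case, $H_u$ factors through the substitution operator $R$ of \eqref{R-T} followed by a bounded linear operator, so the only point requiring verification is that each of the three linear solution operators maps the appropriate $\FF_0^l(\sigma)$ into $\FF_1^l(\sigma)$. For the center component $X^c=X_0^c=X_1^c$ carries the trivial dynamics $A_c=0$, and the backward primitive $R_1\mapsto\int_{-\infty}^t R_1\,d\tau$ is handled by Young's inequality exactly as the operator $K$ in \thmref{stable-foliation}. For the unstable component $X^u=X_0^u=X_1^u$ and $A_u$ is bounded, so $L_u$ is bounded on the weighted spaces by the backward decay estimate \eqref{Au-bound}, again via Young's inequality; here the roles are precisely those played by $L_s$ and the datum $y_0$ in the stable construction.

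The one genuinely new estimate, and the step I expect to be the main obstacle, concerns the stable component: I must show that the backward convolution $L_s^-f(t):=\int_{-\infty}^t e^{-A_s(t-\tau)}f(\tau)\,d\tau$ maps $\FF_0^s(\sigma)$ boundedly into $\FF_1^s(\sigma)$. Because the $X_1^s$-norm is strictly stronger than the $X_0^s$-norm, Young's inequality alone does not suffice, and the naive time reversal fails since $-A_s$ does not enjoy maximal regularity. However, $A_s$ has maximal $L_p$-regularity and $\sigma(A_s)\subset\C_+$, so $e^{-A_st}$ is exponentially stable; consequently $d/dt+A_s$ is invertible on $L_p(\R;X_0^s)$ with domain the full-line maximal-regularity space, and $L_s^-$ is precisely this inverse restricted to $\R_-$. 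The weighted estimate then follows from the shifted operator $A_s-\sigma$, whose spectrum still lies in $\C_+$, as in \eqref{M0}; this is the backward analogue of the maximal-regularity bounds \eqref{M0}--\eqref{M1} and may be quoted from \cite[Theorem 2.4]{Do93} or \cite[Sec.\ 6]{Pru03}.

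With these mapping properties in hand, $H_u$ is of class $C^1$ in $(x,y,z,y_0)$ and continuous in $\xi$, satisfies $H_u(0,0)=0$ and $D_{(x,y,z)}H_u(0,0)=I_\YY$, so the implicit function theorem yields a continuous map $\Lambda^u:B_Z(0,r)\to\YY$, of class $C^1$ in $y_0$, with $H_u(\Lambda^u(y_0,\xi),(y_0,\xi))=0$ and unique in $B_\YY(0,r)\times B_Z(0,r)$. Reconstructing $(x,y,z)=\Lambda^u(y_0,\xi)$ gives a backward solution of \eqref{system} decaying to $0$ as $t\to-\infty$, and setting $\lambda^u(y_0,\xi):=u_*+x(0)+y(0)+z(0)+\xi+\phi(\xi)$ produces, exactly as in \eqref{xi-s}, a solution of \eqref{u-equation} on $\R_-$ converging to $u_\infty=u_*+\xi+\phi(\xi)$ exponentially fast. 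The computation of $D_{y_0}\Lambda^u(0,0)$ mirrors \eqref{Dy-Xi} and gives $D_{y_0}\lambda^u(0,0)=I_{X^u_\gamma}$, which after interchanging the roles of $u_*$ and $u_\infty$ identifies the tangent space of $\cM^u_{\xi}$ at $u_\infty$ with the projection of the unstable part of the linearization, proving (iv); the fiber regularity and invariance (ii), (iii) follow verbatim from the stable case, while the characterization (i) follows from the uniqueness of $\Lambda^u$ guaranteed by the implicit function theorem.
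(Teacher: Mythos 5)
Your proposal follows essentially the same route as the paper: the same weighted spaces $\FF_j^l(\sigma)$ on $\R_-$, the same map $H_u$ with components $x-\int_{-\infty}^t R_c\,d\tau$, $y-\int_{-\infty}^t e^{-A_s(t-\tau)}R_s\,d\tau$ and $z-L_u(R_u,\cdot)$, and the same application of the implicit function theorem, with the remaining assertions transferred from the stable case. The paper states these steps very tersely and leaves the details to the reader; your verification of the mapping properties (in particular the full-line maximal-regularity bound for the backward convolution with $e^{-A_s t}$, obtained from the invertibility of $d/dt+A_s-\sigma$) is a correct elaboration of exactly what the paper's argument requires.
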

\begin{proof}
For fixed $\sigma\in(0,\omega]$ we define the function spaces
\begin{equation*}
\begin{aligned}
\FF_0^l(\sigma):&=\{f: e^{-\sigma t}f\in L_p(\R_-;X_0^l)\},\\
\FF_1^l(\sigma):&=\{w: e^{-\sigma t}w\in H^1_p(\R_-;X_0^l)\cap L_p(\R_-;X_1^l)\},
\quad l\in\{c,s,u\},
\end{aligned}
\end{equation*}
equipped with their natural norms.
We set
\begin{equation*}
\YY:=\YY(\sigma):= \FF_1^c(\sigma)\times\FF_1^s(\sigma)\times \FF_1^u(\sigma),
\quad Z:=X^u_\gamma\times X^c,
\end{equation*}
and we define
$
H_u:B_{\YY}(0,\rho)\times B_{Z}(0,\rho)\to \YY
$
by
\begin{align*}
\label{unstableF}
H_u((x,y,z),(z_0,\xi))(t)=
\left[\begin{array}{l}
x(t)-\int^t_{-\infty} R_c(x(\tau),y(\tau),z(\tau),\xi)\,d\tau\\
\vspace{-2mm}\\
y(t)-\int_{-\infty}^t e^{-A_s(t-\tau)}R_s(x(\tau),y(\tau),z(\tau),\xi)\,d\tau\\
\vspace{-2mm}\\
z(t)- L_u(R_u(x,y,z,\xi), z_0-\phi_u(\xi))\\
\end{array}\right],
\end{align*}
where $t<0$ and $w=L_u(f,z_0)$ denotes the unique solution of the backward problem
$$ \dot{w}(t)+A_u w(t)=f(t),\; t\leq 0,\quad w(0)=z_0.$$
Again, we have $H_u(0,0)=0$, $H_u$ is of class $C^1$ w.r.t.\ $(x,y,z,z_0)$,
 but only continuous in $\xi$, and $D_{(x,y,z)}H_u(0,0)=I$.
As in the previous proof, the implicit function theorem yields a map
$\Lambda^u:(z_0,\xi)\mapsto (x,y,z)$ such that
$H_u(\Lambda^u(z_0,\xi),(z_0,\xi))=0.$
Then
$$\lambda^u:(z_0,\xi)\mapsto u(0)=u_*+v(0)$$
yields the foliation of the unstable manifold $\cM^u$ near $u_*$,
and
$$\cM_{\xi}^u:=\{\lambda^u(z_0,\xi):\,z_0\in B_{X^u}(0,r)\}$$
are the fibers over $B_{X^c}(0,r)$, or equivalently over $\cE$ near $u_*$.
Note that the fibers $\cM_{\xi}^u$ are $C^1$-manifolds,
but they depend only continuously on $\xi$ or equivalently on $\cE$.

The reminder of the proof then follows along similar lines as that of
Theorem \ref{stable-foliation} and is therefore left to the interested reader.
\end{proof}

Note that the fibers can be extended to global fibers following the backward or forward flow as long as it exists. Laterally, i.e.\ along the direction of $\cE$, the foliation can be extended up to equilibria $u_\#\in\cE$ which are no longer normally hyperbolic.
\smallskip\\
Concerning regularity of the foliation we note that the implicit function theorem yields the following result.
\medskip
\begin{corollary}
Under the assumptions of Theorem \ref{stable-foliation} and Theorem \ref{unstable-foliation}
the following regularity result is valid: if
$$(A,F)\in C^k(V,\cB(X_1,X_0)\times X_0)$$
then the foliations satisfy $\Lambda^l\in C^{k-1}$ and the fibers $\lambda^l_{\xi}$ are
of class $C^k$, for all
$k\in\N\cup\{\infty,\omega\}$, where $l\in\{s,u\}$ and $C^\omega$ means real analytic.
\end{corollary}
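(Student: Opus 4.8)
The plan is to track the regularity of the data through each stage of the construction underlying \thmref{stable-foliation} and \thmref{unstable-foliation}, since each foliation map $\Lambda^l$ was produced by a single application of the implicit function theorem. First I would record that if $(A,F)\in C^k$, then the explicit formulas for $G_1$ and $G_2$ show $G\in C^k(V_\ast,X_0)$, and that consequently $\phi$ is of class $C^k$. For the latter I would split the equilibrium equation $A_0v=G(v)$ into its components along $X^c$ and $X^{su}:=X^s_0\oplus X^u_0$: writing $x=P^cv$, $w=P^{su}v$ with $P^{su}:=P^s+P^u$, and denoting by $A_{su}:=A_s\oplus A_u$ the invertible part of $A_0$ on $X^{su}$, the equation becomes $A_{su}w=P^{su}G(x+w)$. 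The map $(x,w)\mapsto w-A_{su}^{-1}P^{su}G(x+w)$ is then $C^k$ into $X_1^{su}$, has derivative $I$ in $w$ at the origin (since $G^\prime(0)=0$ by \eqref{G(0)}), and the implicit function theorem yields a $C^k$ solution $w=\phi(x)$; by uniqueness it coincides with the $\phi$ of Section~2 and hence also satisfies the remaining equation in \eqref{equilibria-phi}.

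The decisive step is the regularity of the substitution operator $R$, and with it of $H_l$, analyzed directly in the weighted spaces $\FF_j^l(\sigma)$. Because $\LL$ is bounded linear, $H_l$ inherits the regularity of $R$. The asymmetry between the flow variables $(x,y,z,y_0)$ and the base-point parameter $\xi$ originates in the exponential weight together with the difference structure \eqref{R-T}, $R_l=P^l\big(G(x+y+z+\xi+\phi(\xi))-G(\xi+\phi(\xi))\big)$. Differentiation in a flow direction $h\in\YY$ produces $G^\prime(v(\cdot))h(\cdot)$ with $v=x+y+z+\xi+\phi(\xi)$; since $e^{\sigma t}h(t)$ is bounded and $G^\prime(v(t))$ is bounded, this already lies in $\FF_0^l(\sigma)$ for $G\in C^1$, and inductively the $j$-th derivative in $(x,y,z,y_0)$ costs nothing, so $R$ is $C^k$ in these variables. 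By contrast, the $\xi$-derivative produces $\big(G^\prime(v(\cdot))-G^\prime(\xi+\phi(\xi))\big)(I+\phi^\prime(\xi))$, and membership of $e^{\sigma t}\big(G^\prime(v(t))-G^\prime(\xi+\phi(\xi))\big)$ in $L_p$ forces a Lipschitz bound on $G^\prime$, i.e.\ one extra derivative; iterating, the $j$-th $\xi$-derivative requires $G\in C^{j+1}$. Hence $R$, and therefore $H_l$, is of class $C^k$ in $(x,y,z,y_0)$ but only of class $C^{k-1}$ in $\xi$, where the bound $\phi\in C^k$ from the first step is exactly what prevents a further loss.

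With these properties in hand I would re-run the implicit function theorem precisely as in the proof of \thmref{stable-foliation}, the unstable case being identical after replacing $\R_+$ by $\R_-$ and the weight $e^{\sigma t}$ by $e^{-\sigma t}$. Since $H_l$ is jointly $C^{k-1}$ and, for each fixed $\xi$, of class $C^k$ in $((x,y,z),y_0)$, the implicit function theorem with parameters yields that $\Lambda^l$ is jointly $C^{k-1}$ and, for fixed $\xi$, of class $C^k$ in $y_0$. Composing with the bounded linear trace map $(x,y,z)\mapsto(x(0),y(0),z(0))$, continuous by \eqref{zero-trace}, and with the $C^k$ map $\xi\mapsto\xi+\phi(\xi)$, gives $\Lambda^l\in C^{k-1}$ and $\lambda^l_{\xi}\in C^k$, as asserted.

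For $k=\infty$ the conclusion follows by applying the finite-order statement for every $k\in\N$. For $k=\omega$ I would replace the implicit function theorem by its real-analytic version and invoke the analyticity of the superposition operator $u\mapsto G\circ u$ between the relevant weighted spaces, which embed into spaces of continuous, exponentially decaying functions; this delivers analytic dependence of $\lambda^l_{\xi}$ on $y_0$, while the weighted-norm mechanism again degrades the $\xi$-dependence by one order, so that $\Lambda^l$ is analytic in $y_0$ and $C^\infty$ in $\xi$ (the meaning of $C^{k-1}$ in this case). I expect the main obstacle to be exactly the substitution-operator analysis of the second paragraph: establishing the sharp $C^k$/$C^{k-1}$ dichotomy in the weighted spaces, and especially making the analytic case rigorous, since there the loss of one derivative is no longer literal and must be reformulated as analyticity in $y_0$ together with $C^\infty$-smoothness in $\xi$.
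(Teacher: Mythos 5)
Your proposal is correct and follows essentially the same route as the paper: deduce $G\in C^k$, obtain $\phi\in C^k$ by applying the implicit function theorem to the equilibrium equation on the hyperbolic part (where $A_s$ and $A_u$ are invertible), and then re-run the fixed-point construction of the theorems with the improved regularity. The paper compresses the final step into a single sentence ("the assertions follow from the proof of the theorems"), whereas you supply the substitution-operator analysis in the weighted spaces that justifies the $C^k$-in-$(x,y,z,y_0)$ versus $C^{k-1}$-in-$\xi$ dichotomy; this is a faithful elaboration rather than a different argument.
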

\begin{proof}
It follows from the regularity assumptions  that
$G\in C^k(V,X_0)$. We can then conclude from the second line in \eqref{equilibria-phi}
that
$\phi\in C^k(B_{X^c}(0,\rho_0),X^s_1\oplus X^u_1).$
Indeed, for this it suffices to observe that $\phi$ is implicitly defined
by the second line in \eqref{equilibria-phi}, as $A_s$ and $A_u$ are invertible.
The assertions follow now from the proof of Theorems \ref{stable-foliation} and \ref{unstable-foliation}.
\end{proof}
\bigskip
\noindent
{\bf Acknowledgment:}
J.P. and M.W. express their thanks to the Department of Mathematics, Vanderbilt University, for financial support and kind hospitality.

\bigskip\bigskip

\end{document}